\newcommand\new[1]{{\color{black}#1}}
\newtheorem{thm}{Theorem}
\newtheorem{prop}[thm]{Proposition}
\newtheorem{lem}[thm]{Lemma}
\newtheorem{rem}[thm]{Remark}
\newcommand{\ldef}{:=}
\newcommand{\Mc}[1]{\mathcal{#1}}
\newcommand{\real}{\ensuremath{\mathbb{R}}}
\newcommand{\realp}{\ensuremath{\mathbb{R}_{>0}}}
\newcommand{\realz}{\ensuremath{\mathbb{R}_{\ge 0}}}
\newcommand{\nat}{{\mathbb{N}}}
\newcommand{\natz}{{\mathbb{N}}_0}
\newcommand{\dd}{\mathrm{d}}
\newcommand{\norm}[1]{\left\lVert #1 \right\rVert}
\DeclareMathOperator*{\argmin}{arg\,min}
\newcommand{\tth}{^\text{th}}
\newcommand{\param}{\textbf{a}}
\newcommand{\sys}{\mathscr{S}}
\newcommand{\thmtitle}[1]{\mbox{}\textit{(#1).}}
\newcommand{\remend}{\relax\ifmmode\else\unskip\hfill\fi\hbox{$\bullet$}}
\begin{document}
	
	\title{\LARGE \bf
		Event-Triggered Parameterized Control of Nonlinear Systems
	}

	\author{Anusree Rajan and Pavankumar Tallapragada
		\thanks{The authors are with the Department of Electrical 
			Engineering, Indian Institute of Science.
			{\tt\small \{anusreerajan, pavant\}@iisc.ac.in}}
	}
	
	\maketitle
	\thispagestyle{empty}
	\pagestyle{empty}

\begin{abstract}
This paper deals with event-triggered parameterized control (ETPC) of nonlinear systems with external disturbances. In 
this control method, between two successive events, each control input 
to the plant is a linear combination of a set of linearly independent 
scalar functions. At each event, the controller updates the coefficients of the parameterized 
control input so as to 
minimize the error in approximating a continuous time control signal and communicates the same to the actuator. We design an 
event-triggering rule \new{(ETR)} that guarantees global uniform ultimate 
boundedness of trajectories of the closed loop system. We also ensure 
the absence of Zeno behavior by showing the existence of a uniform 
positive lower bound on the inter-event times \new{(IETs)}. We illustrate our 
results through numerical examples.
\end{abstract}

\begin{IEEEkeywords} 
Networked control systems, event-triggered control, parameterized control
\end{IEEEkeywords} 	
	
\section{INTRODUCTION}
\IEEEPARstart{E}{vent-triggered} control (ETC) is a commonly used control method in 
applications with resource constraints. Most of the ETC literature designs zero-order-hold (ZOH) sampled-data 
controllers. However, in many common communication protocols, including 
TCP and UDP~\cite{DH:2020}, there is a minimum packet size. Thus, ZOH 
control may lead to an increase in the total number of communication 
instances due to under utilization of each packet. With this 
motivation, in this paper, we propose a non-ZOH control method and 
design it for control of nonlinear systems with external disturbances. 

\subsection{Literature Review}
An introduction to ETC and an overview of the literature on it can be 
found in~\cite{ PT:2007, WH:2012, ML:2010, 
DT-SH:2017-book}. Typically, in ETC 
and in the closely related self-triggered control~\cite{AA:2010} and 
periodic \new{ETC}~\cite{WH:2013}, control input is 
applied in ZOH fashion, i.e., the control input to the plant is held 
constant between any two successive events. There are some exceptions 
though. For example, in model-based ETC 
\new{(MB-ETC)}~\cite{EG-PA:2013, 
MH-FD:2013, HZ-etal:2016, ZC-etal:2021, 
LZ-etal:2021}, both the controller and the actuator use identical 
copies of a model of the system, whose states are updated synchronously 
in an event-triggered manner. The model generates a time-varying 
control input even between two successive events. In 
event/self-triggered model predictive control~\cite{HL-YS:2014, 
FD-MH-FA:2017, HL-etal:2018}, the actuator applies a part of an optimal 
control trajectory, which is generated by solving a finite horizon 
optimal control problem at each triggering instant. Recent studies in~\cite{AL-JS:2023,KH-etal:2017} show that  communication 
resources can be utilized more efficiently by transmitting only some of 
the samples of the generated control trajectory to the actuator, based 
on which a sampled data first-order-hold (FOH) control input is 
applied. 
In event-triggered dead-beat control 
\new{(ET-DBC)}~\cite{BD-etal:2017}, a control input sequence is 
transmitted to the actuator in an event-triggered 
manner. The actuator stores this control sequence in a buffer and 
applies it till the next packet is received. In team-triggered 
control~\cite{CN-JC:2016,JL-etal:2021} each agent makes promises to 
its neighbors about their future states or controls and informs them 
if these promises are violated later. 

References~\cite{AC-CL:1978,PK:1987,JL-etal:2010} use generalized 
sampled-data hold functions (GSHF) in the control of linear 
time-invariant systems. The idea of GSHF is to periodically sample the 
output of the system and generate the control by means of a hold 
function applied to the resulting sequence. To the best of our 
knowledge, this idea was first explored in the context of 
ETC only in our recent work~\cite{AR-PT:2023}, in 
which we propose an event-triggered parameterized control (ETPC) method 
for stabilization of linear systems. In~\cite{AR-etal:2023}, we use a 
similar idea to design an event-triggered polynomial controller for 
trajectory tracking by unicycle robots.

\subsection{Contributions} \label{sec:contribs}

The contributions of this paper are given below:

\begin{itemize}
	\item We \new{propose an ETPC method}, for nonlinear systems with 
	external disturbances, that guarantees global uniform ultimate 
	boundedness of trajectories of the closed loop system and non-Zeno 
	\new{ inter-event times (IETs)}.
	\item Our approach requires fewer 
	communication packets compared to ZOH or FOH control, as our method can be fine tuned to utilize the full payload of each packet. 
	\item Compared to \new{MB-ETC}, our method 
	requires less computational resources at the actuator and also 
	provides greater privacy and security. 
	\item Compared to the \new{ET-}MPC or \new{ET-DBC} method, at each event, our proposed method requires only a limited 
	number of parameters to be sent irrespective of the time duration of 
	the signal. 
	\item In this paper, we generalize the control method proposed in our 
	previous work~\cite{AR-PT:2023} to nonlinear control settings with 
	external disturbances. In~\cite{AR-PT:2023}, the analysis 
	heavily relied on linear systems theory and closed form expressions 
	for the solutions. This approach is not applicable to nonlinear 
	systems, wherein there are also many non-trivial technicalities that 
	have to be taken care of. \new{We also allow for a wider choice for 
	the set of basis functions of the parameterized control input.}
	\item Our recent work~\cite{AR-etal:2023} 
	considers a similar control method for the trajectory tracking by 
	unicycle robots. In the current paper, we consider a more generalized 
	problem setup and also incorporate the effect of external 
	disturbances.
\end{itemize}

\subsection{Notation}

Let $\real$, $\realz$ and $\realp$ denote the set of all real numbers, 
the set of non-negative real numbers and the set of positive real 
numbers, respectively.
Let $\nat$ and $\natz$ denote the set of all positive and non-negative 
integers, respectively. For any $x \in \real^n$, $\norm{x}$ denotes the 
Euclidean norm. A continuous function $\alpha: [0,\infty) 
\to  [0,\infty)$ is said to be of class $\Mc{K}_{\infty}$ if it is 
strictly increasing, $\alpha(0)=0$ and $\alpha(r) \to \infty
$ as $r \to \infty$. For any right continuous function $f: \real_{\ge 
0} \to \real^n$ and $t \ge 0$, $f(t^{+}) \ldef \lim\limits_{s \to 
t^{+}} f(s)$. 
For any continuous function $f:\real \to \real$, 
  \begin{equation*}
    D^+f(t) \ldef \limsup_{h \rightarrow 0^+} \frac{f(t+h)-f(t)}{h}
  \end{equation*}
  denotes the upper right hand derivative of $f$. For any two functions 
  $v, w : [0, \infty] 
\rightarrow \real_{\ge 0}$, let
\begin{equation*}
\langle v, w \rangle_{T} := \int_{0}^{T} v( \tau ) w( \tau ) 
\dd \tau.
\end{equation*}
Note that $\langle v, w \rangle_T$ is the inner product of the 
functions $v(\tau)$ and $w(\tau)$ restricted to 
the domain of $\tau$ to $[0, T]$. 

\vspace{0.5em}

\section{PROBLEM SETUP}	\label{sec:problem_setup}

In this section, we present the system dynamics, the parameterized control law and the objective of this paper.

\subsection*{System Dynamics and Control Law}

Consider a nonlinear system with external disturbance,
\begin{equation}\label{eq:sys}
\dot{x} = f(x,u,d), \quad \forall t \ge t_0 = 0,
\end{equation}
where $x\in \real^n$, $u \in \real^m$, $d \in \real^q$  and $t \in 
\realz$, respectively, denote the system state, the control input, the 
external disturbance and the time. In this paper, we consider 
event-triggered sampled data non-ZOH control. We call our proposed method 
\emph{event-triggered parametrized control (ETPC)}.

Specifically, consider a set of functions
\begin{equation*}
  \Phi := \left\{ \phi_j: [0, \infty] \to \real 
  \right\}_{j=0}^p .
\end{equation*}
We let the $i\tth$ control input, for $i \in \{1,2,\ldots,m\}$, between 
two successive events be
\begin{equation*}
  u_i(t_k + \tau) = g(\param_i(k),\tau) := \sum_{j=0}^{p} 
  a_{ji}(k) \phi_j(\tau), \ \forall \tau \in [0, t_{k+1} - t_k) .
\end{equation*}
Here $(t_k)_{k \in \natz}$ is the sequence of communication time 
instants, which are determined in an event-triggered manner. At 
$t_k$, the controller updates the coefficients of the parameterized 
 control input, $\param(k) \ldef [a_{ji}(k)]\in \real^{(p+1) \times 
 m}$, and communicates them to the actuator. Letting 
 $\param_i(k)$ denote the $i\tth$ column of a(k), and $\phi(\tau) \ldef 
 \begin{bmatrix}
   \phi_0(\tau) & \phi_1(\tau)& \ldots & \phi_p(\tau)
 \end{bmatrix}^\top$, we can write the control law as,
\begin{equation}\label{eq:control_law}
u(t_k + \tau) = \param^\top(k) \phi(\tau), \ \forall \tau \in [0, 
t_{k+1} - t_k) .
\end{equation}

The general configuration of the ETPC system is depicted in 
Figure~\ref{fig:ETPC_system}.
\begin{figure}[h]
	\centering
		\includegraphics[width=7cm]{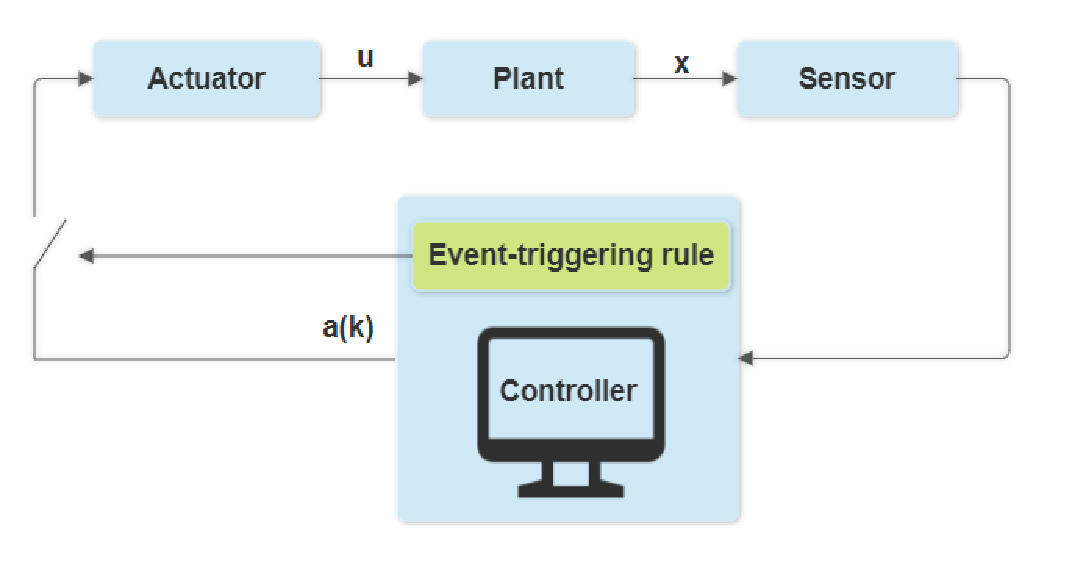}
		\caption{Event-triggered parameterized control configuration}
		\label{fig:ETPC_system}
\end{figure} 
Here, the system state is continuously available to the 
controller which has enough computational resources to evaluate the 
event-triggering condition and to update the coefficients of the 
control input at an event-triggering instant.

\subsection*{Assumptions}

We make the following assumptions throughout this 
paper.

\begin{enumerate}[resume, label=\textbf{(A\arabic*)},align=left]
   \item There exist $\gamma : \real^n \to \real^m$ and a continuously 
  differentiable Lyapunov-like function $V:\real^n \to 
  \real$ such that 
  \begin{equation*}
    \alpha_1(\norm{x}) \le V(x) \le \alpha_2(\norm{x}),
  \end{equation*}
  \begin{equation*}
    \frac{\partial V}{\partial x}f(x,\gamma(x)+e,d) \le 
    -\alpha_3(\norm{x})+\rho_1(\norm{e})+\rho_2(\norm{d})
  \end{equation*}
  where $\alpha_1(.)$, $\alpha_2(.)$, $\alpha_3(.)$, $\rho_1(.)$, 
  $\rho_2(.)$ are class $\Mc{K}_\infty$ functions and $e \in \real^m$, 
  $e \ldef u-\gamma(x)$ is the ``actuation error'' between 
  the control $u$ and $\gamma(x)$.
  \label{A:V}
\item $f(.)$ and $\gamma(.)$ are Lipschitz on compact sets, with $f(0)=0$ and $\gamma(0)=0$.
\label{A:sys}
  \item There exists $D\ge0$ such that $\norm{d(t)}\le D, \ \forall t 
  \ge t_0$.
  \label{A:d}
\end{enumerate}
Note that, Assumption~\ref{A:V} indicates that there exists a 
continuous-time feedback controller that makes the 
system~\eqref{eq:sys} input-to-state-stable (ISS) with respect to the 
``actuation error'' $e$ and the external disturbance $d$. 
Assumption~\ref{A:sys} is a common technical assumption in the 
literature on nonlinear systems. Finally, 
Assumption~\ref{A:d} means that the disturbance signal is uniformly 
upper bounded, which is again common in the literature. Throughout this 
paper, we make the following standing assumption regarding $\Phi$.

\begin{enumerate}[resume, label=\textbf{(A\arabic*)},align=left]
  \item Each function $\phi_j \in \Phi$ is continuously 
  differentiable and $\phi_0$ is a non-zero constant function. Let $T$ be a 
  fixed parameter and suppose $\Phi$ is a set 
  of linearly independent functions when restricted to $[0, T]$, i.e.,
   $ \sum_{j=0}^{p}c_j\phi_j(t)=0,$ $\forall t \in [0, T]$ iff $c_j = 0,$ $\forall j \in \{0,1,\ldots,p\}$.
  \label{A:phis}
\end{enumerate}

\subsection*{Objective}

Our aim is to design a parameterized control 
law~\eqref{eq:control_law} and an event-triggering rule \new{(ETR)} for implicitly 
determining the communication instants $(t_k)_{k \in \natz}$ so that 
the trajectories of the closed loop system are globally uniformly 
ultimately bounded while ensuring a uniform positive lower bound on the 
\new{IET}s.

\section{DESIGN  AND ANALYSIS OF EVENT-TRIGGERED CONTROLLER}\label{sec:design}
In this section, we first design a parameterized control law and an 
\new{ETR} to achieve our objective. Then, we analyze the 
designed control system.

\subsection{Control Law}

The proposed control method is based on the idea of emulating a 
continuous time model based control signal using a 
parametrized time-varying signal as in~\eqref{eq:control_law}. In 
particular, consider the following model for some time horizon $T$,
\begin{equation}\label{eq:simulated_sys}
  \dot{\hat{x}}=f(\hat{x},\gamma(\hat{x}),0), \ \forall t \in [t_k, 
  t_k+T], \quad \hat{x}(t_k)=x(t_k), \ k \in \natz .
\end{equation}
Here $\hat{x}$ is the state of the model, which is the same 
as~\eqref{eq:sys} but with $u=\gamma(\hat{x})$ and $d=0$. The 
model state is 
reinitialized with $\hat{x}(t_k)=x(t_k)$ at each event time $t_k$ for 
$k \in \natz$. Now consider the open-loop control signals
\begin{equation*}
  \hat{u}_i( \tau ) \ldef \gamma_i( \hat{x}(t_k+\tau) ), \quad \forall 
  i \in \{1,2,\ldots,m\} ,
\end{equation*}
where $\gamma_i( \hat{x}(t_k+\tau) )$ is the $i^{th}$ component of 
$\gamma(\hat{x}(t_k + \tau))$. One way to potentially reduce the number 
of communication instances is to transmit the whole control signal 
$\hat{u}(\tau)$ for $\tau \in [0, T]$. For 
example, this is what is done in \new{ET-DBC}~\cite{BD-etal:2017} and \new{ET-}MPC~\cite{HL-YS:2014, FD-MH-FA:2017}. 
However, transmitting the whole control signal 
$\gamma(\hat{x}(t_k+\tau))$ for $\tau \in [0, T)$ in a communication 
packet at $t_k$ may be too costly.

So, in our proposed idea, we approximate $\hat{u}_i( \tau )$ 
for each $i$ in the linear span of $\Phi$. Specifically, we solve the 
following finite horizon optimization problem to determine the 
coefficients of the parameterized control signal~\eqref{eq:control_law} 
that is to be applied starting at $t_k$. For 
$i \in \{1,2,\ldots,m\}$,
\begin{equation}\label{eq:a_ik}
\begin{aligned}
\param_i(k) \in \argmin_{a \in \real^{p+1}} & 
\int_{0}^{T} |g(a,\tau)-\hat{u}_i(\tau)|^2d\tau,\\
\textrm{s.t.} \quad &  |g(a,0)-\hat{u}_i(0)| \le \eta(\norm{\hat{x}(t_k)})\\
\end{aligned}
\end{equation} 
for a function $\eta: \real_{\ge 0} \to \real_{\ge 0}$ with $\eta(0)=0$ 
and for a finite time horizon $T>0$. The function $\eta$ and the time 
horizon $T$ are to be designed. 
Note that, we require the signal $\hat{u}(\tau)$ for $\tau \in [0, T]$ to solve the optimization problem~\eqref{eq:a_ik}. 
This signal can be obtained by numerically simulating the $\hat{x}$ 
dynamics~\eqref{eq:simulated_sys}. 

\begin{rem}\thmtitle{Control input for $\tau > T$}
  With the parameters $\param(k)$ obtained by solving~\eqref{eq:a_ik}, 
  the control input applied by the actuator is as given 
  in~\eqref{eq:control_law}. Since $t_k$'s are 
  implicitly determined by an \new{ETR} online, it may 
  happen that $t_{k+1} - t_k > T$. However, even though we 
  find $\param(k)$ by using $\hat{u}(\tau)$ for $\tau \in [0, T]$, $g( 
  \param_i, \tau )$, for each $i$, is well defined $\forall\tau \in [0, 
  \infty]$.  Hence, the control input $u(t_k + \tau)$ for 
  $\tau$ is well defined for the entire interval $[t_k, t_{k+1})$ even 
  if $t_{k+1} - t_k > T$.
  \remend
\end{rem}

Note that, we can rewrite the 
optimization problem~\eqref{eq:a_ik} as,

\begin{equation}\label{eq:a_k}
\min_{a \in \real^{p+1}} J_i(a) \ := \frac{1}{2} a^\top 
\textbf{H} 
a - b_i^\top(k) a + c_i(k) , \ \textrm{s.t. } 
\ y^\top a-z_i(k) \le 0 ,
\end{equation}

where,
\begin{equation}\label{eq:hessian}
  \textbf{H} = 2\begin{bmatrix}
    \langle\phi_0,\phi_0\rangle_T & 	
    \langle\phi_0,\phi_1\rangle_T & \ldots 
    & \langle\phi_0,\phi_p\rangle_T\\[0.5em]
    \langle\phi_1,\phi_0\rangle_T & 	
    \langle\phi_1,\phi_1\rangle_T & \ldots 
    & \langle\phi_1,\phi_p\rangle_T\\
    \ldots & 	\ldots & \ldots & \ldots\\
    \langle\phi_p,\phi_0\rangle_T & 	
    \langle\phi_p,\phi_1\rangle_T & \ldots 
    & \langle\phi_p,\phi_p\rangle_T
  \end{bmatrix},
\end{equation}

\begin{align*}
  b_i^\top(k) &\ldef 2 \begin{bmatrix}
      \langle\hat{u}_i,\phi_0\rangle_T &      
      \cdots &
      \langle\hat{u}_i,\phi_p\rangle_T
    \end{bmatrix} , \ 
  c_i(k) \ldef \langle\hat{u}_i,\hat{u}_i\rangle_T
  \\
  y^\top &\ldef \begin{bmatrix}
    \phi^\top(0) \\ -\phi^\top(0)
  \end{bmatrix}, \quad
  z_i(k) = \begin{bmatrix}
  \hat{u}_i(0)+\eta(\norm{\hat{x}(t_k)}) \\
  -\hat{u}_i(0)+\eta(\norm{\hat{x}(t_k)})
\end{bmatrix} ,
\end{align*}
where recall that $\phi(\tau) = \begin{bmatrix}
  \phi_0(\tau) & \phi_1(\tau)& \ldots & \phi_p(\tau)
\end{bmatrix}^\top$.

\begin{prop}\label{prop:convexity}
	Problem~\eqref{eq:a_k} is a strictly convex optimization problem and 
	it is always feasible. Problem~\eqref{eq:a_k} always has exactly 
	one optimal solution.
\end{prop}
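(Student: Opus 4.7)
The plan is to verify the three claims in order: (i) strict convexity of the problem, (ii) feasibility, and (iii) existence and uniqueness of the minimizer. Each claim reduces to a structural fact about $\mathbf{H}$ or the constraint, both of which are controlled by Assumption~\ref{A:phis}.

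For strict convexity, I would observe that the feasible set is the intersection of two half-spaces (the rows of the inequality $y^\top a - z_i(k) \le 0$), hence convex, and that the objective is a quadratic form with Hessian $\mathbf{H}$. So strict convexity of the problem will follow once I show $\mathbf{H} \succ 0$. The matrix $\tfrac{1}{2}\mathbf{H}$ is the Gram matrix of $\{\phi_0,\ldots,\phi_p\}$ under the inner product $\langle\cdot,\cdot\rangle_T$. For any nonzero $c \in \real^{p+1}$, linear independence of $\Phi$ on $[0,T]$ (Assumption~\ref{A:phis}) implies $\psi(\tau) := \sum_j c_j \phi_j(\tau)$ is not identically zero on $[0,T]$, and being continuous it is nonzero on a set of positive measure, so $c^\top \mathbf{H} c = 2\int_0^T \psi(\tau)^2 \dd \tau > 0$. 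This yields $\mathbf{H} \succ 0$.

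For feasibility, I would rewrite the two inequalities $y^\top a - z_i(k) \le 0$ as the single condition $|\phi^\top(0) a - \hat{u}_i(0)| \le \eta(\norm{\hat{x}(t_k)})$. Since $\eta \ge 0$, it is enough to exhibit an $a$ making the left side vanish. Assumption~\ref{A:phis} states $\phi_0$ is a nonzero constant, so $\phi_0(0) \neq 0$. Taking $a_0 = \hat{u}_i(0)/\phi_0(0)$ and $a_j = 0$ for $j \ge 1$ gives $\phi^\top(0)a = \hat{u}_i(0)$, verifying feasibility.

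Finally, for existence and uniqueness, I would combine the previous two steps with coercivity: since $\mathbf{H} \succ 0$, $J_i(a) \to \infty$ as $\norm{a} \to \infty$, and $J_i$ is continuous. The feasible set is nonempty (step ii), closed (intersection of closed half-spaces), and convex, so the minimum is attained by Weierstrass. Strict convexity of $J_i$ then forces the minimizer to be unique. I expect the only delicate step to be the positive definiteness of $\mathbf{H}$, and this is exactly where the linear independence half of Assumption~\ref{A:phis} is essential; the rest is routine convex analysis.
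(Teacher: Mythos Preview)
Your proposal is correct and follows essentially the same approach as the paper: both arguments establish strict convexity via positive definiteness of the Gram matrix $\mathbf{H}$ (using the linear independence in Assumption~\ref{A:phis}), exhibit feasibility through the same zero-order-hold point $a_0 = \hat{u}_i(0)/\phi_0(0)$, $a_j = 0$ for $j \ge 1$, and then conclude uniqueness from strict convexity. Your write-up is slightly more explicit in spelling out coercivity and Weierstrass for existence, whereas the paper simply declares the final claim obvious, but the underlying reasoning is identical.
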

\begin{proof}
	The Hessian of $J_i(.)$ for all $i \in \{1,2,\ldots,m\}$, is
	$\textbf{H}$. Note that $\textbf{H}$ is twice the Gram matrix for 
	the functions in $\Phi$. Also, $\Phi$ is a set of 
	linearly independent functions when restricted to $[0,T]$. 
	Thus, we can say that $\textbf{H}$ is a positive definite 
	matrix. Hence, the cost function in~\eqref{eq:a_k} is strictly 
	convex. Note that the only constraints in the optimization 
	problem~\eqref{eq:a_k} are two linear inequality constraints in 
	$a$. Thus,~\eqref{eq:a_k} is a strictly convex optimization 
	problem. Problem~\eqref{eq:a_k} is always feasible as the choice $a_0 
	\phi_0(0) = \hat{u}_i(0)$ and $a_i = 0$ for $i \in \{1, \ldots, 
	p\}$, which gives $g(\param_i(k), \tau)$ as the zero order hold 
	signal, satisfies the constraints. The final claim is now 
	obvious.
\end{proof}

\begin{rem}\thmtitle{Algebraic method to solve Problem~\eqref{eq:a_k}}
  \label{rem:algebraic-sol-to-optim-prob}
  While one may simply rely on standard optimization solvers to solve 
  Problem~\eqref{eq:a_k}, one may also choose to solve a set of 
  algebraic equations to obtain the unique solution to  it, specially 
  if the number of functions $p+1$ in 
  $\Phi$ is small. Moreover, the algebraic method provides useful 
  properties of the solutions of Problem~\eqref{eq:a_k} as a function 
  of $\hat{x}(t_k) = x(t_k)$. This is necessary for the analysis of the 
  overall \new{ETC} system.
  
  The Lagrangian corresponding to Problem~\eqref{eq:a_k} is 
  $\mathscr{L}_i(a,\mu)=J_i(a)+\mu_i^\top (y^\top a - z_i(k))$, 
  where $\mu_i \in \real^2$ is the Lagrange multiplier. Since the 
  Problem~\eqref{eq:a_k} is a strictly convex quadratic 
  program with two linear inequality constraints, strong duality holds 
  for problem~\eqref{eq:a_k} and any optimal primal-dual solution 
  $(\param_i(k),\mu_i(k))$ must satisfy the Karush-Kuhn-Tucker (KKT) 
  conditions. The stationarity conditions can be represented as,
  \begin{equation*}
      \textbf{H} \param_i(k)-b_i(k)+y\mu_i(k)=0.
  \end{equation*}
  Further, the complementary slackness conditions are
  \begin{equation*}
    \mu_{ij}(k)( y_j^\top \param_{i}(k)-z_{ij}(k)) = 0, \quad j \in 
    \{1, 2\},
     \end{equation*}
  where $y_j$ denotes the $j\tth$ column of $y$ for $j \in \{1,2\}$.
  
  Now, we have three different cases. Case 1: both the constraints 
  are inactive. Case 2: Only the first constraint is active. Case 3: 
  Only the second constraint is active. Note that, in 
  problem~\eqref{eq:a_k}, both the constraints can not be active at the 
  same time. In Case 1, $\param_i(k)=\textbf{H}^{-1}b_i(k)$ as 
  $\mu_i(k)=0$. In Case 2, $ y_1^\top \param_{i}(k)=z_{i1}(k)$ and $ 
  \textbf{H} \param_i(k)-b_i(k)+y_1\mu_{i1}(k)=0$ as $\mu_{i2}(k)=0$. 
  By using these facts, we can write,
  \begin{equation*}
    \begin{bmatrix}
      \param_i(k) \\ \mu_{i1}(k)
    \end{bmatrix} = \begin{bmatrix}
      \textbf{H} & y_1 \\ y_1^\top & 0
    \end{bmatrix}^{-1} \begin{bmatrix}
      b_i(k) \\ z_{i1}(k)
    \end{bmatrix} =: M \begin{bmatrix}
      b_i(k) \\ z_{i1}(k)
    \end{bmatrix}.
  \end{equation*}
  Note that the matrix $M$ depends only on 
  the set of functions $\Phi$ and not on $\hat{x}(t_k) = x(t_k)$.
   We can find a similar closed form expression of $\param_i(k)$ in 
  Case 3. In general, one can compute the candidate solutions for 
  each of the three cases and pick the one that satisfies the 
  corresponding constraints. \remend
\end{rem}

\subsection{Event-Triggering Rule}

We consider the following \new{ETR,} \new{which includes two 
conditions. 
The first one is a relative thresholding condition on the actuation 
error $e$ and the second condition helps to guarantee global uniform 
ultimate boundedness of the trajectories of the closed loop system 
under unknown disturbances.} 
\begin{equation}\label{eq:etr}
t_{k+1}= \min \{t>t_k \ : \rho_1(\norm{e}) \ge \frac{\sigma}{2} \alpha_3(\norm{x})  \text{ and }  V(x) \ge \epsilon\}, 
\end{equation}
where $e \new{=} u-\gamma(x)$, $\epsilon \ldef 
\alpha_2(\alpha_3^{-1}(\frac{2\rho_2(D)}{\sigma}))\ge0$ and $\sigma \in (0,1)$ is a design 
parameter. Here $\rho_1(.)$, 
$\rho_2(.)$, $\alpha_2(.)$, and $\alpha_3(.)$ are the same class $\Mc{K}_\infty$ functions 
given in Assumption~\ref{A:V}. Note that, here, $e$ denotes the error 
between the actual control input $u$ and the ``ideal'' feedback control 
input $\gamma(x)$. This error is different from the approximation error 
$u-\hat{u}$ as the dynamics of $x$ and $\hat{x}$ are different. 

In summary, the closed loop system, $\sys$, is the combination 
of the system dynamics~\eqref{eq:sys}, the control 
law~\eqref{eq:control_law}, with coefficients chosen by 
solving~\eqref{eq:a_ik}, which are updated at the events determined by 
the \new{ETR}~\eqref{eq:etr}. That is,
\begin{equation}\label{eq:full-system}
\sys : \ \eqref{eq:sys}, \eqref{eq:control_law}, 
\eqref{eq:a_ik}, \eqref{eq:etr}.
\end{equation}

\begin{rem}\thmtitle{Computational requirement of the controller}
We suppose that the controller has enough computational resources to 
evaluate the \new{ETR}~\eqref{eq:etr} and to solve the 
finite horizon optimization problem~\eqref{eq:a_k} at any triggering 
instant. Note that, \new{ET-}MPC or \new{ET-DBC} methods 
also have similar computational requirements at the controller.   
\remend
\end{rem}

\subsection{Analysis of the event-triggered control system}

Next, we show that the trajectories of the closed loop 
system~\eqref{eq:full-system} are globally uniformly ultimately bounded 
and the \new{IET}s have a uniform positive lower bound that 
depends on the initial state of the system. First, let
\begin{equation*}
  \epsilon_k \ldef V(x(t_k)), \ k \in \natz, \quad \bar{\epsilon} \ldef 
  \max\{\epsilon,\epsilon_0\}.
\end{equation*}
Following lemmas help to prove the main result of this paper.

\begin{lem}\label{lem:V}
	Consider system~\eqref{eq:full-system} and let Assumptions~\ref{A:V} 
	-~\ref{A:phis} hold. Let $\eta(.) \ldef 
	\frac{1}{\sqrt{m}}\rho_1^{-1}(r\frac{\sigma}{2} \alpha_3(.))$ with $r \in [0,1)$. Then, $V(x(t)) \le \epsilon_k \leq 
	\bar{\epsilon}, 
	\forall t \in [t_k,t_{k+1})$ and $\forall k \in 
	\nat$.
\end{lem}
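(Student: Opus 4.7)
The plan is to couple the ISS-type dissipation from Assumption~\ref{A:V} with the two branches of the ETR~\eqref{eq:etr} to show that $V$ strictly decreases along trajectories whenever $V(x) \ge \epsilon$, and then to wrap this in a barrier argument and induct on $k$. The first observation exploits the ETR in two ways. Because an event actually fires at every $t_k$ with $k \in \nat$, both conditions in~\eqref{eq:etr} must hold at $t_k$, so in particular $\epsilon_k = V(x(t_k)) \ge \epsilon$. Conversely, on the open interval $(t_k, t_{k+1})$ the ETR has not fired, so at each such $t$ at least one of its conditions must fail; equivalently, whenever $V(x(t)) \ge \epsilon$ on this interval, the other condition must fail, giving $\rho_1(\norm{e(t)}) < \frac{\sigma}{2}\alpha_3(\norm{x(t)})$.

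I would then feed this into the bound $\dot V \le -\alpha_3(\norm{x}) + \rho_1(\norm{e}) + \rho_2(\norm{d})$ from Assumption~\ref{A:V}. Using $\norm{d} \le D$ from Assumption~\ref{A:d}, on the subset of $(t_k, t_{k+1})$ where $V(x) \ge \epsilon$ one obtains $\dot V < -(1 - \sigma/2)\alpha_3(\norm{x}) + \rho_2(D)$. On the same set, combining $V(x) \ge \epsilon$ with $V(x) \le \alpha_2(\norm{x})$ and the definition $\epsilon = \alpha_2(\alpha_3^{-1}(2\rho_2(D)/\sigma))$ yields $\norm{x} \ge \alpha_3^{-1}(2\rho_2(D)/\sigma)$, i.e., $\rho_2(D) \le \frac{\sigma}{2}\alpha_3(\norm{x})$. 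Substituting back gives $\dot V < -(1-\sigma)\alpha_3(\norm{x}) \le 0$ whenever $V(x) \ge \epsilon$ and $t \in (t_k, t_{k+1})$.

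With strict decay on the level set $\{V \ge \epsilon\}$ secured along the trajectory, a standard continuity/barrier argument establishes $V(x(t)) \le \epsilon_k$ on $[t_k, t_{k+1})$: if $V(x(t^*)) > \epsilon_k$ at some interior $t^*$, a preceding time $t^{**}$ can be chosen with $V(x(t^{**})) = \epsilon_k$ and $V > \epsilon_k \ge \epsilon$ on $(t^{**}, t^*]$, contradicting $\dot V < 0$ there. Continuity of $V \circ x$ at $t_{k+1}$ then yields $\epsilon_{k+1} \le \epsilon_k$, so the sequence $(\epsilon_k)_{k \in \nat}$ is non-increasing. A parallel case analysis on the initial interval $[t_0, t_1)$, split into $\epsilon_0 \ge \epsilon$ and $\epsilon_0 < \epsilon$, gives $\epsilon_1 \le \max\{\epsilon_0, \epsilon\} = \bar{\epsilon}$, and the monotonicity just established propagates this bound to all $k \in \nat$.

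The step that I expect to require the most care is the barrier argument, since the dissipation inequality $\dot V < 0$ is available only on the set $\{V \ge \epsilon\}$, while $V$ could in principle dip below $\epsilon$ between events and subsequently attempt to overshoot $\epsilon_k$ again. The restriction $k \in \nat$ in the statement is precisely what makes the argument clean: it guarantees $\epsilon_k \ge \epsilon$, so any re-ascent toward $\epsilon_k$ must pass through the region $V \ge \epsilon$, where strict decay kills it. A minor secondary subtlety is that $d$ is only assumed bounded, so $\dot V$ should be read in the almost-everywhere sense, but absolute continuity of $V(x(\cdot))$ on each inter-event interval is enough to carry all the monotonicity conclusions through.
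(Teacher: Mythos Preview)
Your proposal is correct and follows essentially the same architecture as the paper's proof: the same ISS dissipation bound from Assumption~\ref{A:V}, the same use of the definition of $\epsilon$ to absorb the $\rho_2(D)$ term on $\{V \ge \epsilon\}$, the same contradiction/barrier argument on each inter-event interval, and the same case split on $[t_0,t_1)$ followed by induction.

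There is one noteworthy difference. The paper explicitly invokes the constraint in~\eqref{eq:a_ik} together with the specific choice of $\eta$ to deduce $\norm{e(t_k^{+})} \le \sqrt{m}\,\eta(\norm{x(t_k)})$ and hence $\rho_1(\norm{e(t_k^{+})}) < \tfrac{\sigma}{2}\alpha_3(\norm{x(t_k)})$, which gives $\dot V(x(t_k^{+}))<0$ at the right endpoint. You bypass this step entirely: your barrier argument only needs $\dot V < 0$ on the open interval $(t_k,t_{k+1})$ where the ETR has not fired, and the endpoint is handled by integrating from $t_k$. That is a legitimate simplification for \emph{this} lemma, and it explains why the hypothesis on $\eta$ appears unused in your sketch. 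Be aware, though, that the paper's explicit $\norm{e(t_k^{+})}$ bound is not redundant globally: it is the ingredient that later guarantees $e_1 < e_2$ in Theorem~\ref{thm:analysis} and hence a positive minimum inter-event time, so do not discard it from your toolkit.
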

\begin{proof}  
	Let us first calculate the time derivative of $V(.)$ along the trajectories of system~\eqref{eq:full-system} as follows,
	\begin{equation*}\label{eq:V_dot}
	\begin{aligned}
	\dot{V}&=\frac{\partial V}{\partial x}f(x,u,d)=\frac{\partial V}{\partial x} f(x,\gamma(x)+e,d),\\&
		\le -\alpha_3(\norm{x})+\rho_1(\norm{e})+\rho_2(D),\\
	& \le 	-(1-\sigma)\alpha_3(\norm{x})- \left( \frac{\sigma}{2} 
	\alpha_3(\norm{x})-\rho_1(\norm{e}) \right), \forall V(x) \ge 
	\epsilon.
	\end{aligned}
	\end{equation*}
	The first inequality follows from Assumptions~\ref{A:V} 
	and~\ref{A:d}. The last inequality follows from the 
	fact that $V(x)\ge \epsilon$ implies $\norm{x} \ge 
	\alpha_2^{-1}(\epsilon)=\alpha_3^{-1} \left( 
	\frac{2\rho_2(D)}{\sigma} \right)$. Note that \new{$e$ may be 
	discontinuous at $t_k$ as the control input $u$ is updated at $t_k$ 
	and} according to the inequality constraint 
	in~\eqref{eq:a_ik}, $\norm{e(t_k^{+})} \le \sqrt{m} 
	\eta(\norm{x(t_k^{+})}), \ \forall k \in \natz$.
	From the definition of $\eta(.)$ and the fact that $r \in [0,1)$, we 
	see that $\rho_1(\norm{e(t_k^{+})}) < \frac{\sigma}{2} 
	\alpha_3(\norm{x(t_k^{+})}), \ \forall k \in \natz$. Further, the 
	\new{ETR}~\eqref{eq:etr} implies that $V(x(t_k)) = 
	\epsilon_k \ge \epsilon, \ \forall k \in \nat$ and \new{as $x(t)$ is 
	continuous for all $t$,} $\dot{V}(x(t_k^{+})) \le 
	-(1-\sigma)\alpha_3(\norm{x(t_k^{+})})<0, \ 
	\forall k \in \nat$.
    
  Now, let us prove the statement that $V(x(t)) \le \epsilon_k, \  
  \forall t \in [t_k,t_{k+1})$ and $\forall k \in 
  \nat$ by contradiction. Suppose that this statement is 
  not true. Then, as $V(x(t))$ is a continuous function of time, there 
  must exist $\bar{t} \in (t_k,t_{k+1})$, for some $k \in \nat$, such 
  that $V(x(\bar{t}))=\epsilon_k$ and 
  $\dot{V}(x(\bar{t}))>0$. However, since $\epsilon_k \ge \epsilon$ and 
  the \new{ETR} is not satisfied at $t=\bar{t}$, 
  $\rho_1(\norm{e(\bar{t})}) < \frac{\sigma}{2} 
  \alpha_3(\norm{x(\bar{t})})$, which means that $\dot{V}(x(\bar{t})) 
  \le -(1-\sigma)\alpha_3(\norm{x(\bar{t})}) < 
  0$. As there is a contradiction, we conclude that there does not 
  exist such a $\bar{t}$ and hence $V(x(t)) \le \epsilon_k, \  
  \forall t \in [t_k,t_{k+1})$ and $\forall k \in 
  \nat$. 
  
  Finally, if $\epsilon_0 \le \epsilon$ then according to the 
  \new{ETR}~\eqref{eq:etr}, $\epsilon_1=\epsilon=\bar{ 
  \epsilon }$. If $\epsilon_0 > \epsilon$ then following similar 
  arguments as before we can show that $V(x(t)) \le \epsilon_0, \  
  \forall t \in [t_0,t_{1})$ and hence $\epsilon_1 \le \epsilon_0=\bar{ 
  \epsilon }$. Thus, the claim that $\epsilon_k \le \bar{\epsilon}, \ 
  \forall k \in \nat$ follows by induction.
\end{proof}

Note that Lemma~\ref{lem:V} does not impose any restrictions on 
$x(t_0)$. In particular, it is possible that $V(x(t_0)) < \epsilon$. 
Lemma~\ref{lem:V} only makes a claim about $V(x(t)), \ \forall t \in 
[t_k,t_{k+1}), \ \forall k \in \nat$. 

\begin{lem}\label{lem:u}
	Consider system~\eqref{eq:full-system} and let Assumptions~\ref{A:V} 
	-~\ref{A:phis} hold. Let $\eta(.) \ldef 
	\frac{1}{\sqrt{m}}\rho_1^{-1}(r\frac{\sigma}{2} \alpha_3(.))$ with $r \in [0,1)$. Then, there exist $\beta_1, \beta_2 >0$ such that 
	$\norm{u(t)} \le \beta_1$ and $\norm{\dot{u}(t)} \le 
	\beta_2$, $\forall t \in [t_k, \min\{ t_{k+1}, t_k+T \})$, 
	$\forall k\in \natz$.
\end{lem}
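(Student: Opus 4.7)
The plan is to bound $\norm{u(t)}$ and $\norm{\dot u(t)}$ by controlling the coefficient matrix $\param(k)$ uniformly in $k \in \natz$. Since $u(t_k+\tau) = \param^\top(k)\phi(\tau)$ and $\dot u(t_k+\tau) = \param^\top(k)\dot\phi(\tau)$, and since each $\phi_j$ is continuously differentiable by Assumption~\ref{A:phis}, the functions $\tau \mapsto \norm{\phi(\tau)}$ and $\tau \mapsto \norm{\dot\phi(\tau)}$ attain finite maxima $M_\phi$ and $M_{\dot\phi}$ on the compact interval $[0,T]$. The restriction of $\tau$ to $[0, \min\{t_{k+1}-t_k, T\})$ in the statement is exactly what allows these maxima to be used; for $\tau > T$ the basis functions $\phi_j$ could in principle grow without bound.

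Next, I would derive a uniform bound on $|\hat u_i(\tau)|$ for $\tau \in [0,T]$. From Lemma~\ref{lem:V} together with the fact that $V(x(t_0)) = \epsilon_0 \le \bar\epsilon$, we have $V(x(t_k)) \le \bar\epsilon$ for every $k \in \natz$, so $\norm{x(t_k)} \le R \ldef \alpha_1^{-1}(\bar\epsilon)$. Applying Assumption~\ref{A:V} along the simulated dynamics~\eqref{eq:simulated_sys} with $e=0$ and $d=0$, and using $\rho_1(0)=\rho_2(0)=0$ (class $\Mc{K}_\infty$), we get $\dot V(\hat x) \le -\alpha_3(\norm{\hat x}) \le 0$. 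Hence $V(\hat x(t_k+\tau)) \le V(\hat x(t_k)) = V(x(t_k)) \le \bar\epsilon$ and $\norm{\hat x(t_k+\tau)} \le R$ for all $\tau \in [0,T]$. By Assumption~\ref{A:sys}, $\gamma$ is Lipschitz on the closed ball of radius $R$ with $\gamma(0)=0$, yielding $|\hat u_i(\tau)| \le L_\gamma R \rdef U$ for every $\tau \in [0,T]$, $i$, and $k$.

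This uniform bound on $\hat u_i$ translates into uniform bounds on the data $b_i(k)$ and $z_i(k)$ of the quadratic program~\eqref{eq:a_k}: by Cauchy--Schwarz, $\norm{b_i(k)} \le 2U\sqrt{T}\bigl(\textstyle\sum_{j=0}^{p}\norm{\phi_j}_{L^2[0,T]}^2\bigr)^{1/2}$, and $\norm{z_i(k)} \le \sqrt{2}(U + \eta(R))$ since $\eta$ is non-decreasing. Invoking the algebraic characterization in Remark~\ref{rem:algebraic-sol-to-optim-prob}, in each of the three KKT cases the pair $(\param_i(k), \mu_i(k))$ is obtained by applying a fixed matrix (either $\textbf{H}^{-1}$ or $M$, depending only on $\Phi$) to $(b_i(k), z_i(k))$. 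Hence there is a constant $C$, depending only on $\Phi$, $T$, $L_\gamma$, $R$, and $\eta$, with $\norm{\param_i(k)} \le C$ for all $i, k$; combining with $M_\phi$ and $M_{\dot\phi}$ gives the desired $\beta_1$ and $\beta_2$.

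The step I expect to be the main obstacle is the second one: transferring the $V$-bound along the actual closed-loop trajectory $x$ (from Lemma~\ref{lem:V}) to a bound along the open-loop simulated trajectory $\hat x$ over the entire horizon $[t_k, t_k+T]$, not merely until the next event. The key observation is that $\hat x$ is initialized at $x(t_k)$ (which is bounded by Lemma~\ref{lem:V}) and then evolves under the ideal feedback $\gamma$ with no disturbance, so Assumption~\ref{A:V} with $e=d=0$ immediately gives monotone non-increase of $V(\hat x)$ and the required sublevel-set invariance without any further triggering argument on the $\hat x$ dynamics.
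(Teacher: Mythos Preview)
Your proposal is correct and follows essentially the same approach as the paper: use Lemma~\ref{lem:V} to place $x(t_k)$ in the sublevel set $\{V \le \bar\epsilon\}$, propagate $\hat x$ there via $\dot V(\hat x)\le 0$ under the ideal feedback with $e=d=0$, bound $\hat u_i$ by Lipschitzness of $\gamma$, then bound $b_i(k)$, $z_i(k)$ and hence $\param_i(k)$ via the algebraic KKT expressions in Remark~\ref{rem:algebraic-sol-to-optim-prob}, and finally combine with the boundedness of $\phi$ and $\dot\phi$ on $[0,T]$. Your identification of the ``main obstacle'' and its resolution is exactly the key observation the paper also uses.
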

\begin{proof}
	Recall that, $\forall i \in 
	\{1,2,\ldots,m\}$ and $\forall k \in \natz$, $u_i(t)$ for $t \in 
	[t_k, t_{k+1})$ is chosen by solving the problem~\eqref{eq:a_ik}, 
	which is equivalent to~\eqref{eq:a_k}.	
	Now, 
		we consider the compact set $R \ldef \{x \in \real^n:V(x) \le 
		\bar{\epsilon}\}$. According to Lemma~\ref{lem:V}, for any $k \in \natz$, 
	$\hat{x}(t_k)=x(t_k) \in R$, which implies that $\hat{x}(t) \in R$ for 
	all $t \in [t_k,t_k+T)$ as $\dot{V}(\hat{x})\le-\alpha_3(\hat{x})\le0$. 
	Now, note that,
	\begin{equation*}
	| \hat{u}_i(\tau) | \le \norm{\gamma(\hat{x}(t_k+\tau))} \le L_{\gamma},
	\end{equation*}
	for some $L_{\gamma}>0$. The last inequality follows from the 
	fact that $\gamma(.)$ is Lipschitz on the compact set $R$ with 
	$\gamma(0)=0$ and $\norm{\hat{x}} \le \alpha_1^{-1}( \bar{ \epsilon } 
	)$ for all $\hat{x} \in R$. 
The fact $\hat{x}(t_k)=x(t_k) \in R$, $ \forall 
  k \in \natz$, further implies that $\eta(\norm{ x(t_k) })$ and 
  $|\hat{u}_i(0)|$ are upper bounded, $\forall k \in \natz$ and thus, 
  $\norm{b_i(k)}$ and $\norm{z_i(k)}$ are also uniformly, $\forall k 
  \in \natz$, upper bounded by some constants.
  This along with the algebraic solution of $\param_i(k)$ given in 
  Remark~\ref{rem:algebraic-sol-to-optim-prob}, implies that 
  $\norm{\param_i(k)}$ is upper bounded by a constant for each $i \in 
  \{1,2,..,m\}$, $\forall k \in \natz$.
  Since each $\phi_j(.) \in \Phi$ is continuously differentiable on 
  $[0, T]$, we can say that there exist $\beta'_1, \beta'_2 > 0$ such 
  that 
  $\norm{\phi(t-t_k)} \le \beta'_1$ and $\norm{ \dfrac{\dd}{\dd t} 
  \phi(t-t_k)} \le \beta'_2$, $\forall t \in [t_k,\min\{ t_{k+1}, t_k+T 
  \} ], \ \forall k \in \natz$. Putting it all together along 
	with~\eqref{eq:control_law} proves the result.
\end{proof}
Now, we present the main 
result of this paper.
\begin{thm}\label{thm:analysis}\thmtitle{Absence of Zeno behavior and global uniform ultimate boundedness}
	Consider system~\eqref{eq:full-system} and let Assumptions~\ref{A:V} 
	-~\ref{A:phis} hold. Let $\eta(.) \ldef 
	\frac{1}{\sqrt{m}}\rho_1^{-1}(r\frac{\sigma}{2} \alpha_3(.))$.
	\begin{itemize}
		\item If 
		$r \in \left[ 0, \frac{ \alpha_3 ( \alpha_2^{-1} (\epsilon) 
		}{ \alpha_3( \alpha_1^{-1}(\bar{\epsilon}))} \right)$, then the 
		\new{IET}s, $t_{k+1} - t_k, \ \forall k \in \nat$, 
		are uniformly lower bounded by a positive number that depends 
		on the initial state of the system.
		\item The trajectories of the closed loop system are globally uniformly ultimately bounded with global uniform ultimate bound $\alpha_1^{-1}(\epsilon)$.
	\end{itemize}

\end{thm}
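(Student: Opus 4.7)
The plan is to prove the two claims in sequence, with both leaning on the compact set $R = \{x : V(x) \le \bar\epsilon\}$ supplied by Lemma~\ref{lem:V} and on the control-input regularity established in Lemma~\ref{lem:u}.

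For the IET lower bound, I would fix any $k \in \nat$ and contrast the actuation error just after $t_k$ with the error required to trigger the next event. The inequality constraint in~\eqref{eq:a_ik} together with the explicit form of $\eta(\cdot)$ yields $\rho_1(\norm{e(t_k^+)}) \le r \tfrac{\sigma}{2} \alpha_3(\norm{x(t_k)})$, which together with Lemma~\ref{lem:V}'s bound $\norm{x(t_k)} \le \alpha_1^{-1}(\bar\epsilon)$ gives $\rho_1(\norm{e(t_k^+)}) \le r \tfrac{\sigma}{2} \alpha_3(\alpha_1^{-1}(\bar\epsilon))$. At the next event the triggering rule~\eqref{eq:etr} combined with $V(x(t_{k+1})) \ge \epsilon$ (which forces $\norm{x(t_{k+1})} \ge \alpha_2^{-1}(\epsilon)$) delivers $\rho_1(\norm{e(t_{k+1})}) \ge \tfrac{\sigma}{2} \alpha_3(\alpha_2^{-1}(\epsilon))$. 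The hypothesis on $r$ is precisely what makes the corresponding gap $\Delta$ in $\norm{e}$ strictly positive. If $t_{k+1} - t_k \ge T$, the claim holds trivially; otherwise, Lemma~\ref{lem:u} bounds $\norm{\dot u} \le \beta_2$ on $[t_k,t_{k+1})$, and combining this with local Lipschitzness of $\gamma$ on $R$ (Assumption~\ref{A:sys}) and the resulting uniform bound on $\norm{f(x,u,d)}$ produces $\norm{\dot e} \le \beta_3$ for some constant $\beta_3 > 0$. Integrating gives $\Delta \le \beta_3(t_{k+1}-t_k)$, hence the positive lower bound $t_{k+1} - t_k \ge \min\{T, \Delta/\beta_3\}$ that depends on the initial state only through $\bar\epsilon$.

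For ultimate boundedness, Lemma~\ref{lem:V} already supplies $V(x(t)) \le \bar\epsilon$ for $t \ge t_1$. The Lyapunov bookkeeping inside that lemma's proof also shows that whenever $V(x(t)) \ge \epsilon$ and the ETR has not yet fired, $\dot V(x(t)) \le -(1-\sigma)\alpha_3(\norm{x(t)}) \le -(1-\sigma)\alpha_3(\alpha_2^{-1}(\epsilon))$, a strictly negative constant. A direct comparison argument then forces $V(x(t))$ into the set $\{V \le \epsilon\}$ within a finite time of order $(\bar\epsilon-\epsilon)/[(1-\sigma)\alpha_3(\alpha_2^{-1}(\epsilon))]$. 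To rule out subsequent escape I would argue by continuity: at any time $\tau$ with $V(x(\tau)) = \epsilon$ strictly between events, either the ETR fires at $\tau$ (in which case the post-event analysis from Lemma~\ref{lem:V} gives $\dot V(\tau^+) < 0$) or the first triggering condition is violated at $\tau$ (in which case the Lyapunov inequality itself forces $\dot V(\tau) < 0$); in either case $V$ cannot cross $\epsilon$ upward, so $\norm{x(t)} \le \alpha_1^{-1}(\epsilon)$ for all sufficiently large $t$.

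The main obstacle I expect is producing the uniform bound $\norm{\dot e} \le \beta_3$ in Part~1, because Lemma~\ref{lem:u}'s bound on $\norm{\dot u}$ is valid only on $[t_k,\min\{t_{k+1},t_k+T\})$ and must be combined with (i) Lipschitzness of $\gamma$ on $R$ from Assumption~\ref{A:sys}, and (ii) a uniform bound on $\norm{f(x,u,d)}$ deduced from $x \in R$, $\norm{u} \le \beta_1$, and $\norm{d} \le D$ (Assumption~\ref{A:d}); splitting the argument into the cases $t_{k+1} - t_k \ge T$ and $t_{k+1} - t_k < T$ is what lets this go through cleanly. A secondary care point is the oscillation of $V$ at the threshold $\epsilon$ in Part~2, which I would handle via the continuity argument above rather than by claiming strict positive invariance of $\{V \le \epsilon\}$.
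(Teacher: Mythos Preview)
Your proposal is correct and follows essentially the same route as the paper: the compact set $R$, the gap between $\rho_1(\norm{e(t_k^+)})$ and the trigger threshold, a uniform bound on the growth rate of $\norm{e}$ via Lemma~\ref{lem:u} and the Lipschitz property of $\gamma$ on $R$, and the Lyapunov decrease $\dot V \le -(1-\sigma)\alpha_3(\norm{x})$ on $\{V \ge \epsilon\}$. Two small presentational differences: (i) the paper works with the upper Dini derivative $D^+\norm{e}$ rather than $\norm{\dot e}$, which is slightly safer since $d(\cdot)$ is only assumed bounded; (ii) the paper does not make your case split $t_{k+1}-t_k \gtrless T$ explicit, simply writing $t_{k+1}-t_k \ge (e_2-e_1)/\beta$, so your treatment of the domain of validity of Lemma~\ref{lem:u} is in fact more careful than the paper's own argument.
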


\begin{proof}
	Let us prove the first statement of this theorem. First, note that 
	Assumption~\ref{A:V} implies $\alpha_1(\norm{x}) \leq 
	\alpha_2(\norm{x})$ $\forall x \in \real^n$ and then by the 
	definition of $\bar{\epsilon}$, we can say that $r \in [0, 1)$. Now, 
	we consider the compact set $R \ldef \{x \in \real^n:V(x) \le 
	\bar{\epsilon}\}$. Lemma~\ref{lem:V} implies that $x(t) \in R, \ 
	\forall t \in [t_k,t_{k+1}), \forall k \in \nat$. Note that according 
	to the proof of Lemma~\ref{lem:V},
$\norm{e(t_k^{+})} \le \rho_1^{-1}(r\frac{\sigma}{2} 
\alpha_3(\norm{x(t_k^{+})})), \ \forall 
	k \in \natz$. Now, by using the fact that $\alpha_2^{-1} (\epsilon) 
	\le \norm{x(t_k)}\le\alpha_1^{-1}( \bar{ \epsilon } ), \forall k \in 
	\nat$,  
	we can say that the \new{IET} must at least be equal to the 
	time it takes $\norm{e}$ to grow from $e_1 
	\ldef\rho_1^{-1}(r\frac{\sigma}{2} \alpha_3(\alpha_1^{-1}(\bar{  
	\epsilon} )))$ to $e_2 \ldef \rho_1^{-1}(\frac{\sigma}{2} 
	\alpha_3(\alpha_2^{-1}(\epsilon)))$. Note that, if $r$ is chosen as 
	in the statement of the result, then we can guarantee that $e_1 < 
	e_2$. Next, we give a uniform upper-bound on $D^+ 
	\norm{e(t)}, \ \forall t \in [t_k,t_{k+1}), \forall k \in \nat$. 
  Note that $\forall i \in \{1, \ldots, m\}$,
  \begin{align*}
    | D^+ \gamma_i(x(t)) | &\leq \limsup_{h \rightarrow 0^+} \frac{| 
    \gamma_i(x(t+h)) - \gamma_i(x(t)) |}{h}
  \\
  &\leq \limsup_{h \rightarrow 0^+} \frac{L \norm{ x(t+h) - x(t) } 
  }{h}
  \\
  &\leq L \limsup_{h \rightarrow 0^+} \norm{f(x(t+h), u(t+h), d(t+h)} 
  \leq \beta_3,
  \end{align*}
  where we have used the fact $x(t) \in R, \ \forall t \in 
  [t_k,t_{k+1}), \forall k \in \nat$ from Lemma~\ref{lem:V}, and $L$ is 
  a Lipschitz constant for $\gamma_i$ on the compact set $R$ and the 
  final inequality follows from the additional facts of boundedness of 
  $\norm{u(t)}$ from Lemma~\ref{lem:u} and Assumptions~\ref{A:sys} 
  and~\ref{A:d}. We then have
  \begin{equation*}
    D^+ \norm{e} \le \norm{\dot{u}} + m \beta_3 \leq \beta, \quad 
    \forall t \in [t_k,t_{k+1}), \forall k \in \nat,
  \end{equation*}
    
	for some $\beta>0$. The last inequality follows from the uniform 
	boundedness of $\norm{\dot{u}}$ from Lemma~\ref{lem:u}. This implies 
	that $t_{k+1}-t_k \ge 
	\frac{e_2-e_1}{\beta}$, for any $k \in \nat$, which completes the 
	proof of the first statement of this result.
  
  Next, since the \new{IET}s have a uniform positive lower bound, $t_k 
	\rightarrow \infty$ as $k \rightarrow \infty$. Thus, 
	for all $t \ge t_0 = 0$,
		\begin{equation*}
		\dot{V} \le  -(1-\sigma)\alpha_3(\norm{x})<0, \quad \forall V(x) 
		\ge \epsilon,
	\end{equation*}
  \new{which implies the second statement of the result.}
\end{proof}

\section{NUMERICAL EXAMPLES}\label{sec:numerical_examples}

We illustrate our results with two numerical examples. \newline
\textbf{Example 1:} (Controlled Lorenz model with 
disturbances)
\begin{align}
&\dot{x}_1=-ax_1+ax_2+d_1, \notag\\
&\dot{x}_2=bx_1-x_2-x_1x_3+u+d_2, \notag\\
&\dot{x}_3=x_1x_2-cx_3+d_3, \label{eq:controlled-lorenz}
\end{align}
where $a,b,c \in \realp$ and $d \ldef \begin{bmatrix}
d_1 & d_2 & d_3
\end{bmatrix}^{\top}$ is the external disturbance. We set the parameter values $a=10$, $b=28$ and $c=8/3$. We can show that Assumption~\ref{A:V} and Assumption~\ref{A:sys} hold with $V(x)=\frac{1}{2}\norm{x}^2$ and $\gamma(x)=-(a+b)x_1-\frac{1}{2}x_2$. We consider the disturbance $d(t)=\frac{0.1}{\sqrt{3}}\begin{bmatrix}
	\sin(50t) &  \sin(20t) &  \sin(10t)
	\end{bmatrix}^{\top}.$ Note that Assumption~\ref{A:d} holds with $D=0.1$.  In this example, we consider the control input as 
a linear combination of the set of functions $\{1,\tau, 
\tau^2,\ldots,\tau^p\}$. 
\begin{figure}[h]
	\centering
	\begin{subfigure}{4cm}
		\includegraphics[width=4.4cm]{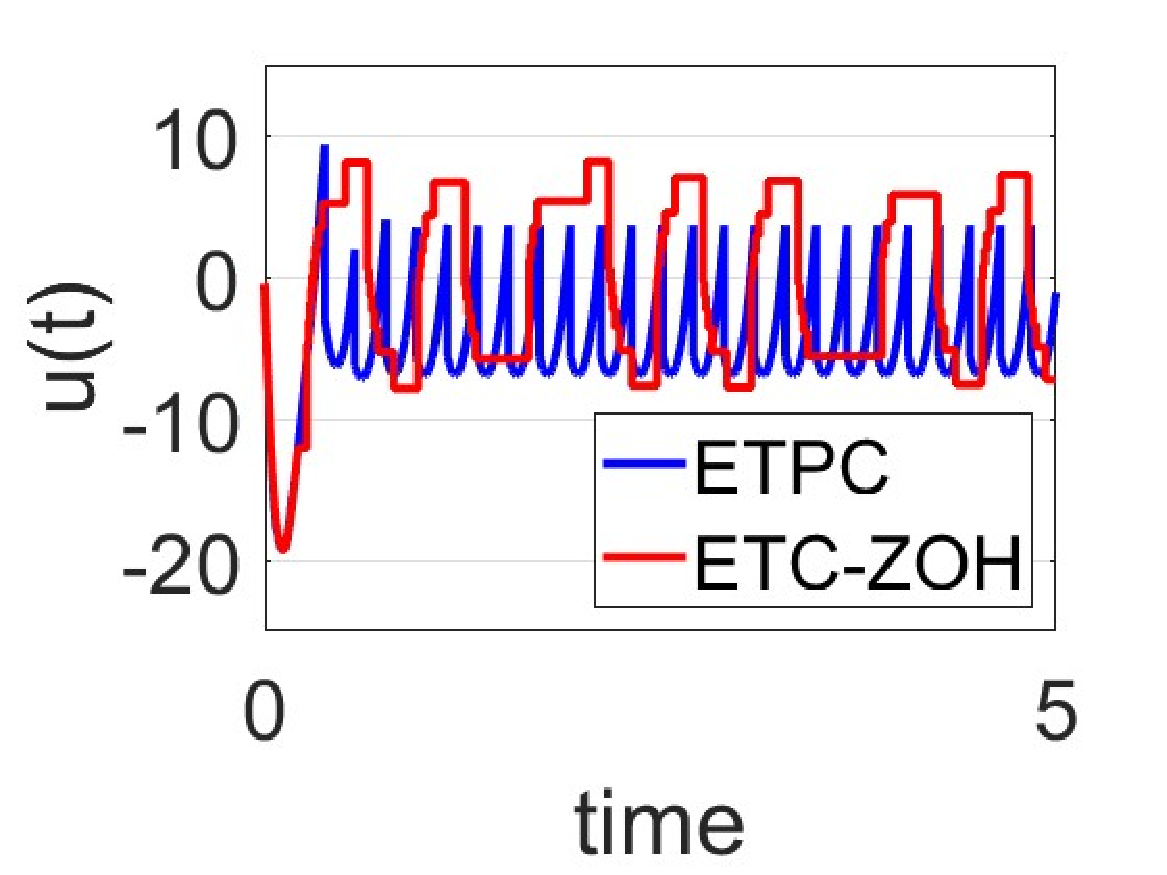}
		\caption{Control input signal}
		\label{fig:IET1}
	\end{subfigure}
	\quad
	\begin{subfigure}{4cm}
		\includegraphics[width=4.4cm]{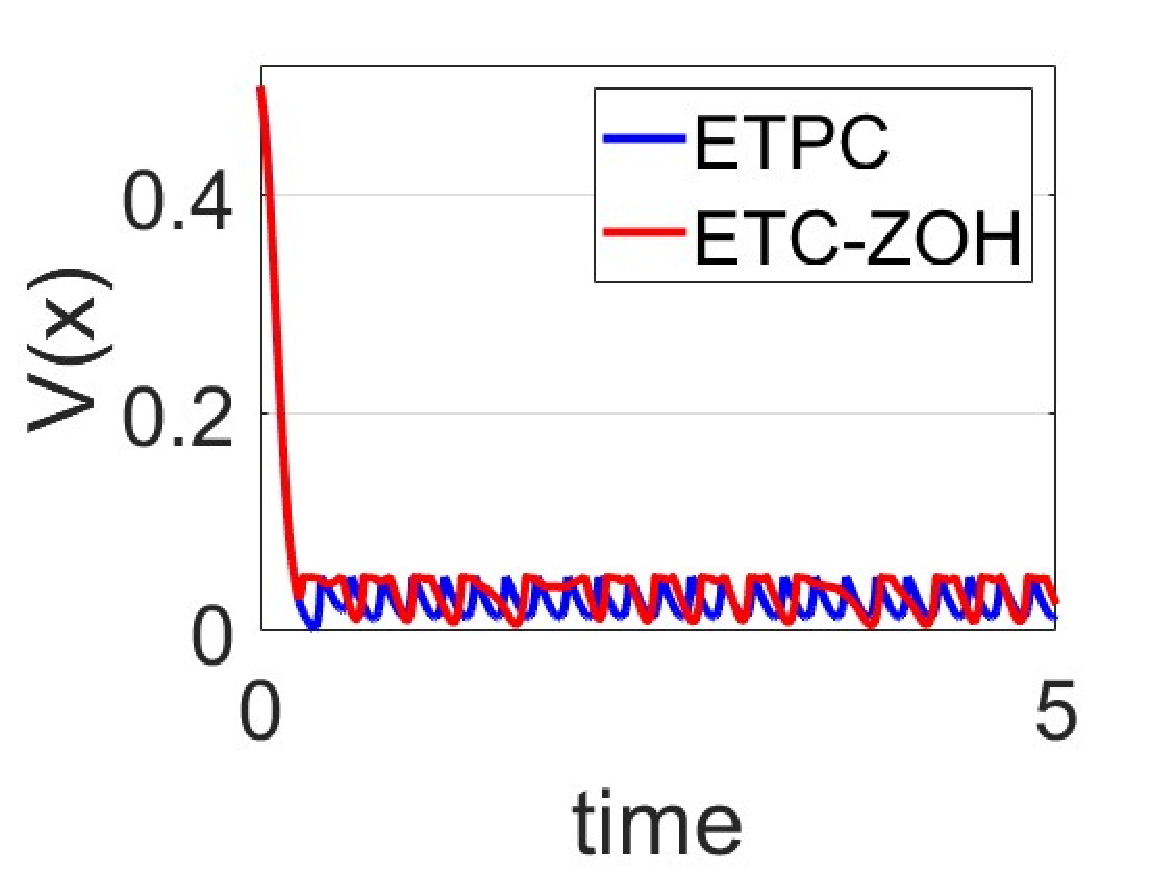}
		\caption{Evolution of $V(x(t))$}
		\label{fig:V1}
	\end{subfigure}
	\caption{Simulation results of Example 1 for $p=3$, 
      $T=0.1$ and $x(0)=[0\quad1\quad0]^{\top}$.}
	\label{fig:Ex1}
\end{figure}
\new{Here, we compare the performance of the proposed ETPC method with 
the zero-order-hold control based ETC (ETC-ZOH) method. In ETC-ZOH 
method, we choose the same \new{ETR}~\eqref{eq:etr} without further 
tuning of the parameters, but the control law is $u(t)=\gamma(x(t_k)), 
\ \forall t \in [t_k,t_{k+1})$.}
Figure~\ref{fig:Ex1} presents the simulation results with $p=3$, 
$T=0.1$ and $x(0)=[0\quad1\quad0]^{\top}$. Figure~\ref{fig:IET1} 
presents the evolution of control input to the plant for the proposed ETPC method and 
zero-order-hold ETC (ETC-ZOH). Figure~\ref{fig:V1} 
shows the evolution of $V(x)$ along the system trajectory for both the 
methods. Note that, $V(x)$ converges to the ultimate bound 
$\epsilon=0.05$ in both the cases.

Next, we consider $100$ initial conditions uniformly sampled from the 
unit sphere and we calculate the average \new{IET} (AIET) and 
the minimum \new{IET} (MIET) over $100$ events for each initial 
condition with $T=0.3$, and $p=3$. The average of AIET over the set of 
initial conditions is observed as $0.0138$ and $0.3649$ for 
ETC-ZOH and ETPC, respectively. The minimum of MIET over the set of 
initial conditions is observed as $8.2744 \times 10^{-4}$ and $0.0141$ 
for ETC-ZOH and ETPC, respectively. 
\new{Note that, for the given choice of control law and the \new{ETR}, the proposed ETPC method performs better, in terms of the AIET and MIET, compared to the ETC-ZOH method.}

We 
repeat the procedure for different values of 
$T$ and $p$, and the observations are tabulated in 
Table~\ref{tab:table2}. In Table~\ref{tab:table2}, we can see that 
there is a decreasing trend in the values of AIET and MIET as 
$T$ increases. Note that choosing a larger $T$ can lead to a 
	greater fitting error in the short-term just after $t_k$ and hence 
	leads to smaller \new{IET}s. Whereas, there is an increasing trend in AIET and MIET as $p$ increases. Note that choosing a larger $p$ helps to find a better approximation of the continuous time model based control signal and hence leads to larger \new{IET}s.

\begin{table}[h]
	\begin{center}
		\caption{Average of AIET and minimum of MIET, over a set of initial conditions, for 
				different values of $T$ and $p$.}
		\label{tab:table2}
		\begin{tabular}{|c|c|c|c|c|c|c|} 
			\hline
			& \multicolumn{6}{c}{T}\vline\\
			\hline
			& \multicolumn{2}{c}{0.4} \vline& \multicolumn{2}{c}{0.6} \vline& \multicolumn{2}{c}{0.8}\vline\\
			\hline
			p & \textbf{AIET} & \textbf{MIET} & \textbf{AIET} & \textbf{MIET} & \textbf{AIET} &\textbf{MIET} \\
			\hline
			3 & 0.2552 & 0.0073 & 0.1389 & 0.0036 & 0.0919 & 0.0024\\
			4 & 0.3686 & 0.0209 & 0.2691 & 0.0068 & 0.1662 & 0.0042\\
			5 & 0.3698 & 0.0224 & 0.3668 & 0.0166 & 0.3043 & 0.0071\\
			\hline
		\end{tabular}
	\end{center}
\end{table}

Next, \new{for the system~\eqref{eq:controlled-lorenz} with no} 
disturbance, 
we compare the performance of the ETPC method against dynamic 
\new{ETC} (DETC) with the following 
dynamic \new{ETR} proposed in~\cite{AG:2015},
	\begin{equation}\label{eq:DETC}
	t_{k+1}= \min \{t>t_k \ :  \nu(t) + \theta \left( \frac{\sigma}{2} 
	\alpha_3(\norm{x})-\rho_1(\norm{e}) \right) \le 0\}, 
	\end{equation}
	where the dynamic variable $\nu(t)$ follows the dynamics,
	\begin{equation*}
	\dot{\nu}(t)= -\omega(\nu)+\frac{\sigma}{2} \alpha_3(\norm{x})-\rho_1(\norm{e}), \ \nu(0)=\nu_0.
	\end{equation*}
	Here, $\omega(.)$ is a class $\Mc{K}_{\infty}$ function, $\nu_0 \ge 0$ and $\theta \ge 0$ are design parameters. We choose $\omega(\nu)=0.5\nu$ and $\nu_0=0$. In Table~\ref{tab:table3}, we compare the performance of DETC method based on ZOH control, ETPC method with static \new{ETR}~\eqref{eq:etr} and ETPC method with dynamic \new{ETR}~\eqref{eq:DETC}.
	\begin{table}[h]
		\begin{center}
			\caption{Average of AIET and minimum of MIET, over a set of 
			100 initial conditions for DETC and ETPC.}
			\label{tab:table3}
			\begin{tabular}{|c|c|c|c|} 
				\hline &
				\textbf{DETC-ZOH} & 
				\textbf{ETPC-static} & 
				\textbf{ETPC-dynamic}\\
				\hline
				\textbf{AIET} & 0.0066  & 0.0534 & 0.2550  \\
				\hline
				\textbf{MIET} & $8.7633 \times 10^{-4}$  & 0.0010 &	0.0191 \\
				\hline
			\end{tabular}
		\end{center}
	\end{table}
	We can see that, \new{for the given choice of the dynamic variable $\nu$}, the proposed ETPC method (with $T=0.3$ and $P=5$) \new{performs better in terms of the AIET and MIET} compared to the \new{DETC} method. \newline
\textbf{Example 2:} (Forced Van der Pol oscillator with 
disturbances)
	\begin{align*}
	&\dot{x}_1=x_2+d_1, \quad \dot{x}_2=(1-x_1^2)x_2-x_1+u+d_2,
	\end{align*}
	where $d \ldef \begin{bmatrix}
	d_1 & d_2 
	\end{bmatrix}^{\top}$ is the external disturbance. We can show that Assumption~\ref{A:V} and Assumption~\ref{A:sys} hold with $V(x)=x^{\top}Px$ where $P=\begin{bmatrix}
	4.5 & 1.5 \\ 1.5 & 3
	\end{bmatrix}$ and $\gamma(x)=-x_2-(1-x_1^2)x_2$. We consider the disturbance $d(t)=\frac{0.1}{\sqrt{2}}\begin{bmatrix}
	\sin(10t) &  \sin(20t)
	\end{bmatrix}^{\top}.$ Note that Assumption~\ref{A:d} holds with $D=0.1$.  In this example also, we consider the control input as 
	a linear combination of the set of functions $\{1,\tau,\ldots,\tau^p\}$. The average of AIET over a set of 100
	initial conditions is observed as $0.0938$ and $1.6948$ for 
	ETC-ZOH and ETPC (with $T=1$ and $p=3$), respectively. The minimum of MIET over the set of 
	initial conditions is observed as $9.93 \times 10^{-4}$  and  $0.0096$
	for ETC-ZOH and ETPC, respectively.
	
\section{CONCLUSION}\label{sec:conclusion}

In this paper, we proposed the \new{ETPC} method for control of nonlinear systems with external 
disturbances. 
We designed a parameterized control law and an \new{ETR} 
that guarantee global uniform ultimate boundedness of the trajectories 
of the closed loop system and non-Zeno behavior of the generated 
\new{IET}s. 
\new{We illustrated our 
results through numerical examples.} 
Future work includes the generalization of this control method to 
distributed control setting, analytical
	method to determine an optimal time horizon for function
	fitting, systematic methods for choosing the basis functions, 
	control under model uncertainty, quantization of the parameters, time 
	delays, and a control Lyapunov function or MPC approach to 
ETPC.

\bibliographystyle{IEEEtran}
\bibliography{references}
\end{document}